\newenvironment{red}
{\relax\color{red}}
{\hspace*{.5ex}\relax}
\newcommand{\ber}{\begin{red}}
\newcommand{\er}{\end{red}}
\newenvironment{verd}
{\relax\color{magenta}}
{\hspace*{.5ex}\relax}
\newcommand{\bg}{\begin{verd}}
\newcommand{\eg}{\end{verd}}
\theoremstyle{defnition}
\newtheorem{Thm}{Theorem}
\newtheorem{Lem}[Thm]{Lemma}
\def\tp{{}^t\!}
\def\det{\mathrm{det}}
\def\perm{\mathrm{perm}}
\def\ep{\varepsilon}
\def\Mat{\mathrm{Mat}}
\def\stab{\mathrm{Stab}}
\def\rank{\mathrm{rank}}
\def\detab{\det^{(\alpha,\beta)}}
\def\detba{\det^{(\beta,\alpha)}}
\def\bA{\bar{A}}
\begin{document}
\title[Stabilizer group of generalized determinant]{Stabilizer group of generalized determinant}
\date{}

\author[Ryo Yamamoto]{Ryo Yamamoto}
\address{Department of Pure and Applied Mathematics, Graduate School of Information
Science and Technology, Osaka University, Suita, Osaka 565-0871, Japan}
\email{r-yamamoto@ist.osaka-u.ac.jp}

\keywords{}

\begin{abstract}
	In this paper, we introduce the notion of generalized determinant and determine the stabilizer group in $ GL(\Mat_n(K)) $ of the generalized determinant.
\end{abstract}
\maketitle

\section{Introduction.}

For an $ n \times n $ matrix $ A := (a_{ij})_{1 \le i, j \le n}$, we will define the \textit{even determinant} $ \det_{A_n}\!(A) := \sum_{\sigma \in A_n} \prod_{i=1}^{n}a_{i \sigma(i)} $ and the \textit{odd determinant} $ \det_{\bar{A}_n}\!(A) := \sum_{\sigma \in\bar{A}_n} \prod_{i=1}^{n}a_{i \sigma(i)} $ where $ \bar{A}_n $ is the set $ S_n \backslash A_n $.	
Let $ K $ be a field of characteristic 0.	
Let $ \alpha , \beta \in K $.
We introduce the \textit{generalized determinant} $\detab (A)$ of an $ n \times n $ matrix $A$ by 
$$\detab (A) := \alpha \, \det_{A_n}\!(A) + \beta \, \det_{\bar{A}_n}\!(A) .$$
Let $ X := (x_{ij})_{1 \le i, j \le n} $ where $ x_{ij} $ are the standard basis of $ \Mat_n^* $.
Write $ \det_n := \det(X) $, $ \perm_n := \perm(X)$ and $ \detab_n := \detab(X) $.
The stabilizer group $ \stab(f) $ of $ f \in \mathrm{Sym}^n(\Mat_n^*) $ is defined as $ \stab(f) := \{ \, T \in GL(\Mat_n(K)) \mid T \cdot f = f \, \}$.
If $ \alpha = -\beta \neq 0$, then $ \stab (\det^{(\alpha,-\alpha)}) = \stab (\det_n) $, which had been determined by Frobenius \cite{F} as follows.
\begin{Thm}[Frobenius]
	It holds that $\stab(\det_n)=\{\,X\mapsto PXQ$ or $P\tp XQ \mid \det P \, \det Q =1\,\} $.
	Here $ P , Q \in GL_n(K) $. 
\end{Thm}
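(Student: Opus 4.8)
The inclusion $\supseteq$ is the easy half: if $T\colon X\mapsto PXQ$ with $P,Q\in GL_n(K)$, then $T$ is invertible and $\det(T(A))=\det P\,\det Q\,\det A=\det A$ precisely when $\det P\,\det Q=1$, while the transpose case reduces to this via $\det({}^t\!A)=\det A$. So the whole content lies in the reverse inclusion, and the plan is to fix an arbitrary $T\in\stab(\det_n)$, i.e.\ an invertible linear map of $\Mat_n(K)$ with $\det(T(A))=\det(A)$ for all $A$, and force it into one of the two stated shapes.

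The first step is to show that $T$ preserves rank. I would use the intrinsic description
\[
\rank(M)=n-\min_{A\in\Mat_n(K)}\mathrm{ord}_{t=0}\,\det(M+tA),
\]
where $\mathrm{ord}_{t=0}$ denotes the order of vanishing at $t=0$ of the one-variable polynomial $t\mapsto\det(M+tA)$. The bound $\mathrm{ord}\ge n-\rank(M)$ holds because, expanding $\det(M+tA)$ by columns, every coefficient of $t^{k}$ with $k<n-\rank(M)$ is a determinant using more than $\rank(M)$ columns of $M$ and hence vanishes; equality for generic $A$ follows from a Schur-complement computation after normalizing $M$ to $\mathrm{diag}(I_r,0)$. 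Now $T(M)+tT(A)=T(M+tA)$, so $\det(T(M)+tT(A))=\det(M+tA)$, and since $T$ is bijective $T(A)$ ranges over all of $\Mat_n(K)$ as $A$ does; therefore the right-hand minimum is unchanged under $M\mapsto T(M)$, giving $\rank(T(M))=\rank(M)$ for every $M$. In particular $T$ carries rank-one matrices to rank-one matrices.

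The crux is the second step: turning rank-one preservation into an explicit factorization. Every rank-one matrix is $u\,\tp v$ for nonzero columns $u,v\in K^n$, so projectively the rank-one locus is the Segre image of $\mathbb{P}^{n-1}\times\mathbb{P}^{n-1}$ in $\mathbb{P}(\Mat_n)$, and $T$ induces a linear automorphism of it. The key incidence fact I would exploit is that the pencil spanned by $u\,\tp v$ and $u'\,\tp v'$ consists entirely of rank-one matrices if and only if $u\parallel u'$ or $v\parallel v'$; linearity of $T$ forces it to respect such pencils, so $T$ either preserves both rulings of the Segre variety or interchanges them. Tracking a basis of rank-one matrices through this structure, one extracts invertible $P,Q$ with either $T(u\,\tp v)=(Pu)\,\tp(\tp Q\,v)$ for all $u,v$, whence $T(A)=PAQ$ by linearity, or the swapped version $T(A)=P\,\tp A\,Q$. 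Comparing determinants in each case then yields $\det P\,\det Q=1$, completing the reverse inclusion.

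I expect the genuine obstacle to be this second step, namely converting the abstract statement ``$T$ preserves rank one'' into the concrete product $PXQ$ (or $P\,\tp XQ$) and, in particular, proving that the only ambiguity beyond left/right multiplication is the single transpose swap. This is exactly the classical rank-one preserver theorem, equivalently the determination of the automorphism group of $\mathbb{P}^{n-1}\times\mathbb{P}^{n-1}$, and it will require the careful basis bookkeeping sketched above; the rank and pencil arguments feeding into it are comparatively routine.
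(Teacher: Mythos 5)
The paper does not prove this theorem at all: it is quoted as a classical result and attributed to Frobenius \cite{F}, so there is no in-paper proof to compare yours against; I can only judge your argument on its own terms and against the paper's methods for its main theorem. Your route is the standard one for determinant preservers, and it is sound. The first step is essentially complete as written: the characterization $\rank(M)=n-\min_{A}\mathrm{ord}_{t=0}\det(M+tA)$ is correct (the lower bound by counting columns drawn from $M$, equality after normalizing $M$ to $\mathrm{diag}(I_r,0)$ and taking $A=\mathrm{diag}(0,I_{n-r})$), and since $K$ is infinite the pointwise identity $\det(T(M+t_0A))=\det(M+t_0A)$ for all $t_0\in K$ upgrades to an identity of polynomials in $t$, so rank is preserved. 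The second step is correctly structured but, as you acknowledge, only sketched; it does go through. The pencil criterion is right, and the dichotomy (preserve or swap the two rulings) follows because an injective linear map satisfies $T(A\cap B)=T(A)\cap T(B)$: two maximal subspaces $R_u:=\{u\,\tp v \mid v\in K^n\}$ of the same ruling meet in $0$, while subspaces of opposite rulings meet in a line, so $T$ cannot send two members of one ruling to members of different rulings. To finish, write $T(e_i\,\tp e_j)=c_{ij}\,u_i\,\tp w_j$ in the ruling-preserving case; demanding that $T\bigl((e_i+e_k)\,\tp(e_j+e_l)\bigr)$ have rank one forces every $2\times 2$ minor of $(c_{ij})$ to vanish, so $c_{ij}=l_ir_j$, and taking $P$ with columns $l_iu_i$ and $Q$ with rows $r_j\,\tp w_j$ gives $T(A)=PAQ$ by linearity; the swapped case gives $T(A)=P\,\tp A\,Q$, and comparing determinants yields $\det P\,\det Q=1$. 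It is worth noting that this closing computation is precisely the mechanism of the paper's Lemmas \ref{CPXQ} and \ref{rackC}, where the Hadamard coefficient matrix $C$ is shown to have rank one and is then split into diagonal factors $L,R$; so your plan is the natural $\det_n$-analogue of the strategy the paper uses for $\detab_n$, and with the bookkeeping indicated above it becomes a complete proof.
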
	
On the other hand, if $ \alpha = \beta \neq 0$, then $ \stab (\det^{(\alpha,\alpha)}) = \stab (\perm_n) $, which was determined by Marcus and May \cite{MM} as follows.		
\begin{Thm}[Marcus and May]
	Let $ n \ge 3 $.
	It holds that $\stab(\perm_n) = \{ \, X \mapsto LPXQR $ or $ LP \tp X QR  \mid  \det L \, \det R = 1 \, \}$ where $P $ and $ Q $ are permutation matrices, $L$ and $R$ are diagonal matrices.
\end{Thm}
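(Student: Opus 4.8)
The plan is to treat $G:=\stab(\perm_n)$ as a linear algebraic group over $K$ and to determine it in two stages: first its identity component $G^0$, and then the finite quotient $G/G^0$. The inclusion ``$\supseteq$'' is the routine verification: left/right multiplication by a permutation matrix permutes the rows/columns and hence the monomials of the permanent; multiplication by diagonal matrices $L,R$ rescales $\perm$ by $\det L\,\det R$; and $\perm(\tp A)=\perm(A)$. Thus every listed map preserves $\perm_n$ precisely when $\det L\,\det R=1$. The content is the reverse inclusion, for which I would set $T:=\{\,A\mapsto LAR\mid L,R\text{ diagonal},\ \det L\,\det R=1\,\}$, a torus contained in $G$, and show that $G=T\rtimes((S_n\times S_n)\rtimes\ZZ/2)$.

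First I would prove $G^0=T$ by computing the Lie algebra. A derivation $D\in\mathrm{Lie}(G)\subseteq\mathfrak{gl}(\Mat_n(K))$ satisfies the polynomial identity $\sum_{i,j}(DX)_{ij}\,\partial_{ij}\perm_n=0$, where $\partial_{ij}\perm_n$ is the permanental minor obtained by deleting row $i$ and column $j$. Writing $D(X)_{ij}=\sum_{k,l}D_{ij,kl}\,x_{kl}$, I would expand each product $x_{kl}\,\partial_{ij}\perm_n$ and classify its monomials by their defect type: a monomial is either a genuine transversal (one entry per row and per column) or it carries a doubled row and/or a doubled column together with a missing row and/or column. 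Matching coefficients type by type is a clean finite computation. The coefficient of a transversal $\prod_r x_{r\pi(r)}$ gives $\sum_i D_{i\pi(i),i\pi(i)}=0$ for all $\pi$, forcing $D_{ij,ij}=u_i+v_j$ with $\sum_i u_i+\sum_j v_j=0$; the monomials with a single doubled row (resp.\ column) give two-term relations that, using $\mathrm{char}\,K=0$ and $n\ge3$, force $D_{ij,il}=0$ and $D_{ij,kj}=0$ for $k\ne i,\ l\ne j$; and each monomial with both a doubled row and a doubled column occurs in exactly one of these products, forcing $D_{ij,kl}=0$ for $k\ne i,\ l\ne j$. Hence $D\colon A\mapsto\Lambda A+AM$ with $\Lambda,M$ diagonal and $\mathrm{tr}\,\Lambda+\mathrm{tr}\,M=0$, so $\mathrm{Lie}(G)=\mathrm{Lie}(T)$ and therefore $G^0=T$.

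Since $G^0=T$ is normal in $G$, every $g\in G$ normalizes $T$ and hence permutes the weight lines of $T$ on $\Mat_n(K)$, which are exactly the coordinate lines $\langle E_{ij}\rangle$. Thus $g\,E_{ij}=c_{ij}\,E_{\pi(i,j)}$ for scalars $c_{ij}\in K^\times$ and a permutation $\pi$ of the $n\times n$ grid. Substituting into $\perm_n(gX)=\perm_n(X)$ and comparing monomials shows that $\pi$ must carry every transversal of the grid to a transversal. The key combinatorial input is that, for $n\ge3$, the permutations of the $n\times n$ grid preserving the family of transversals are exactly $(S_n\times S_n)\rtimes\ZZ/2$, that is, independent row and column permutations together with transposition. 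Each such $\pi$ is realized by a map $A\mapsto PAQ$ or $A\mapsto P\,\tp A\,Q$; multiplying $g$ by the inverse of this realization reduces to the case $\pi=\mathrm{id}$, so that $g\,E_{ij}=c_{ij}\,E_{ij}$. Then $\perm_n(gX)=\sum_\sigma\prod_i c_{i\sigma(i)}\,x_{i\sigma(i)}=\perm_n(X)$ forces $\prod_i c_{i\sigma(i)}=1$ for all $\sigma$, whence $c_{ij}=s_it_j$ with $\prod_i s_i\prod_j t_j=1$, i.e.\ $g\in T$. This yields the asserted description of $G$.

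The main obstacle is the combinatorial classification in the third step: proving that every grid permutation preserving all transversals is a product of row/column permutations and a transpose, and pinning down exactly where the hypothesis $n\ge3$ enters (for $n\le2$ there are too few transversals and spurious symmetries appear). The Lie-algebra bookkeeping, though lengthy, is mechanical, and the final reduction to $T$ once $\pi$ is trivial is immediate. I would therefore concentrate the detailed work on the transversal-preserving permutations and on the clean point that the intrinsic object permuted by $G$ — the weight lines of $G^0$ — recovers the grid together with its row/column incidence.
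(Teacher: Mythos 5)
First, a point of orientation: the paper contains no proof of this statement --- it is quoted from Marcus and May \cite{MM}, and the paper's own machinery is structurally unable to supply one, because Lemma \ref{h1} needs $\alpha^2-\beta^2\neq 0$ to split $P_2^{(\alpha,\beta)}(F_{ij})=P_2^{(\beta,\alpha)}(F_{ij})=0$ into $P_2(F_{ij})=\overline{P}_2(F_{ij})=0$, and that is exactly what fails for $\perm_n$ (the case $\alpha=\beta$). So your proof is necessarily a different route, and I find it sound. The Lie-algebra computation checks out: transversal monomials give $D_{ij,ij}=u_i+v_j$ with total sum zero; a doubled-column monomial occurs in exactly two of the products $x_{kl}\,\partial_{ij}\perm_n$, giving two-term relations which, with $n\ge 3$ and characteristic zero, kill the entries $D_{ij,il}$, $D_{ij,kj}$; the doubled-row-and-column monomials containing $x_{kl}$ occur exactly once and kill $D_{ij,kl}$; smoothness in characteristic zero then yields $G^0=T$. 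The weight argument is also correct, since the characters $l_ir_j$ remain pairwise distinct after restriction to the subtorus $\det L\,\det R=1$. The combinatorial lemma you isolate as the main obstacle is classical: a transversal-preserving bijection of the grid preserves the relation ``no transversal contains both cells,'' i.e.\ it is an automorphism of the rook's graph $K_n\,\square\,K_n$, and the automorphism group of this Cartesian product is $(S_n\times S_n)\rtimes\ZZ/2$; note this holds for all $n\ge 2$, so the hypothesis $n\ge 3$ is consumed entirely by the Lie-algebra step (for $n=2$, $\perm_2$ is a nondegenerate quadratic form and $G^0$ is an orthogonal group strictly containing $T$), not by the combinatorics as your last paragraph suggests. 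Comparing routes: your torus-plus-normalizer argument accomplishes by general theory what the paper does by hand for $\detab_n$ --- your weight-line step plays the role of Lemmas \ref{lin}--\ref{h1} (basis matrices map to scaled basis matrices), your grid combinatorics plays the role of Lemma \ref{CPXQ}, and your final factorization $\prod_i c_{i\sigma(i)}=1\Rightarrow c_{ij}=s_it_j$ is precisely the vanishing of all $2\times 2$ minors of $C$ that Lemma \ref{rackC} establishes by induction. What your approach buys is conceptual uniformity and coverage of the permanent itself; what the paper's elementary approach buys is independence from algebraic-group machinery, at the price of being confined to the regime $\alpha\neq\pm\beta$.
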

Our purpose is to relate the two results by $ \mathbb{P}^1(K) $-family $ \{ \detab_n \mid [\alpha : \beta] \in \mathbb{P}^1(K) \} $.
Our main result is the following.
\begin{Thm}\label{main}
	Let $n \ge 5$ .
	If $ \alpha \neq \pm \beta $, then $\stab(\detab_n) = \stab(\det_n) \cap \stab(\perm_n)$.
\end{Thm}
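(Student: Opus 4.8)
The plan is to use the permanent summand to rigidify the geometry: although the determinantal hypersurface carries many large linear subspaces, only the ``coordinate'' ones survive for $\detab_n$, and tracking them forces any stabilizing $T$ into monomial form, after which a coefficient comparison pins down exactly the intersection. First I would record two reductions. Writing $f:=\detab_n$, the coefficient of the monomial $\prod_i x_{i\sigma(i)}$ in $f$ is $\alpha$ for $\sigma\in A_n$ and $\beta$ for $\sigma\in\bar{A}_n$; equivalently $f=\tfrac{\alpha-\beta}{2}\det_n+\tfrac{\alpha+\beta}{2}\perm_n$, and $\alpha\ne\pm\beta$ makes both coefficients nonzero. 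The inclusion $\stab(\det_n)\cap\stab(\perm_n)\subseteq\stab(f)$ is immediate, so everything lies in the reverse inclusion: given $T\in\stab(f)$ I must show $T\in\stab(\det_n)\cap\stab(\perm_n)$.

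The geometric core is a classification of the top-dimensional linear subspaces of the projective hypersurface $V(f)\subseteq\mathbb{P}(\Mat_n(K))$. For each $i$ the space $R_i$ of matrices with $i$-th row zero, and for each $j$ the space $C_j$ of matrices with $j$-th column zero, lie on $V(f)$ (a zero row or column annihilates every permutation monomial) and have affine dimension $n^2-n$. I would prove that, for $n\ge 5$, these $2n$ spaces are exactly the $(n^2-n)$-dimensional linear subspaces of $V(f)$, and that $V(f)$ contains no larger one. This is where the permanent term is essential and where I expect the main obstacle: the classical Flanders--Dieudonné description produces many $(n^2-n)$-dimensional subspaces on $V(\det_n)$ (columns confined to a hyperplane, or a common kernel vector), but on all of these $\perm_n$ does not vanish identically, so $f$ does not vanish; the real work is to show that the combination creates no exotic large subspace, and it is precisely this ruling-out that needs $n\ge 5$.

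Granting the classification, $T$ preserves $V(f)$ and hence permutes $\{R_1,\dots,R_n,C_1,\dots,C_n\}$. The incidence dimensions $\dim(R_i\cap R_k)=n^2-2n$ and $\dim(R_i\cap C_j)=n^2-2n+1$ distinguish ``same-family'' from ``cross-family'' pairs, so $T$ either preserves the two families or interchanges them, the latter case reducing to the former after composing with transpose. In the preserving case $T$ acts by permutations $\pi$ on rows and $\rho$ on columns, and from $\mathrm{Row}_i=\bigcap_{k\ne i}R_k$ and $\mathrm{Col}_j=\bigcap_{l\ne j}C_l$ one gets $T(\mathrm{Row}_i)=\mathrm{Row}_{\pi(i)}$, $T(\mathrm{Col}_j)=\mathrm{Col}_{\rho(j)}$, whence $T(KE_{ij})=KE_{\pi(i)\rho(j)}$, i.e. $T(E_{ij})=t_{ij}E_{\pi(i)\rho(j)}$ with all $t_{ij}\ne 0$.

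Finally I would substitute this monomial form into $T\cdot f=f$. Using $(T^{-1}X)_{ij}=t_{ij}^{-1}x_{\pi(i)\rho(j)}$ and comparing the coefficient of $\prod_i x_{i\tau(i)}$ on both sides gives, for every $\tau\in S_n$,
$$\prod_{i=1}^{n}t_{i,\mu(i)}=\frac{c_\mu}{c_\tau},\qquad \mu:=\rho^{-1}\tau\pi,$$
where $c_\sigma=\alpha$ for $\sigma\in A_n$ and $c_\sigma=\beta$ for $\sigma\in\bar{A}_n$. If $\pi,\rho$ have equal sign then $\mu,\tau$ have the same parity, the right-hand side is $1$ for all $\tau$, and since $\mu$ ranges over all of $S_n$ this forces $\prod_i t_{i\mu(i)}=1$ for every $\mu$; a short argument on $2\times 2$ minors then yields $t_{ij}=m_in_j$ with $\prod_i m_i\prod_j n_j=1$. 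If instead $\pi,\rho$ have opposite sign, the right-hand side alternates between $\alpha/\beta$ and $\beta/\alpha$, and comparing $\prod_i t_{i\mu(i)}$ for three permutations differing by transpositions forces $\alpha^2=\beta^2$ (one cancels a factor $2$, which is where $\ch=0$ enters), contradicting $\alpha\ne\pm\beta$. Hence only the equal-sign case survives, so $T$ is the map $X\mapsto MXN$ (or $X\mapsto M\tp XN$ in the interchanged case) with $M,N$ generalized permutation matrices whose scalar parts multiply to $1$ and whose underlying permutations have equal sign; since $\det M\,\det N=1$ and the diagonal parts have product-of-determinants $1$, comparison with the theorems of Frobenius and of Marcus--May identifies these as exactly the elements of $\stab(\det_n)\cap\stab(\perm_n)$, which completes the reverse inclusion.
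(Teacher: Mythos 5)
Your strategy (classify the maximal linear subspaces of $V(\detab_n)$, deduce that $T$ is a scaled permutation of matrix units, then compare coefficients) is coherent, but its load-bearing step is never proved. The classification of all $(n^2-n)$-dimensional linear subspaces of $V(\detab_n)$ as exactly the $2n$ coordinate row/column spaces is announced with ``I would prove that \dots'' and you yourself flag the exclusion of exotic subspaces as ``the real work''; nothing in the proposal carries that work out, and it is not a citable known result for the pencil $\alpha\det_{A_n}+\beta\det_{\bA_n}$. What must be excluded is not only the classical compression spaces of $V(\det_n)$ (on which $\perm_n$ visibly does not vanish) but arbitrary linear spaces on which $\det_n$ and $\perm_n$ are proportional with ratio $-\frac{\alpha+\beta}{\alpha-\beta}$ without either vanishing; that such spaces exist in low dimension is shown by $n=2$, where the quadric $\alpha x_{11}x_{22}+\beta x_{12}x_{21}=0$ carries two rulings of non-coordinate lines. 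Without this lemma the rest of your argument has no starting point. The paper reaches the same monomial-form conclusion ($T(E_{ij})=c_{ij}E_{i'j'}$) by a completely different and elementary route: differentiating $\detab(T(X))=\detab(X)$ down to $2\times2$ sub-determinants (Lemma \ref{lin}), the vanishing criterion of Lemma \ref{rank1}, and a dimension count (Lemma \ref{h1}), followed by Lemma \ref{CPXQ}.

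The endgame also contains a step that fails as stated. Your identity $\prod_i t_{i\mu(i)}=c_\mu/c_\tau$ presupposes $c_\tau\neq0$, but $\alpha\neq\pm\beta$ allows $\beta=0$ (or $\alpha=0$); in that case the comparison is vacuous for odd $\tau$ and one obtains $\prod_i t_{i\mu(i)}=1$ only for $\mu\in A_n$, not for all of $S_n$. Then the ``short argument on $2\times2$ minors'' is unavailable, since one cannot pass from $\mu$ to $(kl)\mu$ inside $A_n$ --- and indeed the conclusion $t_{ij}=m_in_j$ is \emph{false} for $n=4$: the paper's solution \eqref{C1} of the system over $A_4$ admits sign patterns $\ep_u,\ep_v=-1$ that are not of rank one, which is precisely why the theorem needs $n\ge5$. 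Extracting rank one from the $A_n$-products alone is the content of the paper's Lemma \ref{rackC}, an induction on $n$ that occupies most of the paper; your proposal treats this as routine and instead locates the need for $n\ge5$ in the unproven subspace classification. (Your opposite-parity case is also only sketched --- the clean way to kill it is the relations $P_e+P_{(12)(34)}=P_{(12)}+P_{(34)}$ and $\sum_{\mu\in A_3}P_\mu=\sum_{\mu\in\bA_3}P_\mu$, giving $(\alpha/\beta)^2=(\beta/\alpha)^2$ up to roots of unity of coprime orders --- but that part is repairable; the two gaps above are not repairable without essentially new arguments.)
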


\section{Notations.}
We define a submatrix $X_{l_1 \cdots l_r}^{k_1 \cdots k_r} $ of X by $X_{l_1 \cdots l_r}^{k_1 \cdots k_r} := (x_{k_il_j})_{1 \le i.j \le r} $.
Let $ P_r^{(\alpha,\beta)}(X) := (\detab (X_{l_1 \cdots l_r}^{k_1 \cdots k_r}))_{\substack{1 \le k_1 < \dots < k_r \le n \\ 1 \le l_1 < \dots < l_r \le n}} $ be a $\binom{n}{r} \times \binom{n}{r}$ matrix.
Specially, we write $ P_r(X) := P^{(1,0)}_r(X) $ and $\overline{P}_r(X) := P^{(0,1)}_r(X).$

\begin{Lem}\label{lin}
	Let $ n \ge 4 $.
	If $\detab(T(X))=\detab(X)$, then $ \detab(T(X)_{l_1 l_2}^{k_1 k_2})$ and $ \detba(T(X)_{l_1 l_2}^{k_1 k_2})$ are expressible as a linear combination of $ \detab(X_{l'_1 l'_2}^{k'_1 k'_2})$ and $ \detba(X_{l'_1 l'_2}^{k'_1 k'_2}) (1 \le k_1', k_2', l_1', l_2' \le n)$ respecticely.
\end{Lem}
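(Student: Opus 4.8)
The plan is to read off each $2\times 2$ generalized minor of $T(X)$ from the degree-$n$ identity $\detab(T(X))=\detab(X)$ by applying a differential operator of order $n-2$, and then to observe that the very same operator, rewritten in the original coordinates $x_{ij}$, sends $\detab(X)$ to nothing but a linear combination of $2\times 2$ generalized minors of $X$. The invertibility of $T$ is what lets me move freely between the two coordinate systems.

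First I would record an extraction principle. Let $Z=(z_{ij})$ be an $n\times n$ matrix of indeterminates, fix rows $R=\{k_1,k_2\}$ with complement $R^c$, and let $\rho\colon R^c\to\{1,\dots,n\}$ be an injection with image $C$, writing $\{l_1,l_2\}:=\{1,\dots,n\}\setminus C$. Set $\partial_\rho:=\prod_{i\in R^c}\partial_{z_{i\rho(i)}}$. Applied to $\detab(Z)=\sum_\sigma c_\sigma\prod_i z_{i\sigma(i)}$, this operator annihilates every monomial except those with $\sigma|_{R^c}=\rho$; the two surviving permutations differ by the transposition $(l_1\,l_2)$, hence have opposite parities, so
$$\partial_\rho\,\detab(Z)=\detab(Z_{l_1 l_2}^{k_1 k_2})\quad\text{or}\quad \detba(Z_{l_1 l_2}^{k_1 k_2}),$$
the type being dictated by the sign of the permutation that extends $\rho$. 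More generally, any constant-coefficient operator of order $n-2$ applied to $\detab(Z)$ yields a linear combination of forms $\detab(Z_{\cdots}^{\cdots})$ and $\detba(Z_{\cdots}^{\cdots})$: expanding it into monomial operators $\prod_s\partial_{z_{i_s j_s}}$, any term with a repeated row index kills $\detab(Z)$ (no monomial of $\detab$ carries two entries from one row), while a term with distinct rows and distinct columns isolates exactly one such minor.

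Next I would exploit that $T\in GL(\Mat_n(K))$. Putting $y_{ij}:=(T(X))_{ij}$ gives a second linear coordinate system, so $K[x_{ij}]=K[y_{ij}]$, the derivations $\partial_{y_{ij}}$ are defined, and the chain rule gives $\partial_{y_{ij}}=\sum_{a,b}(T^{-1})_{ab,ij}\,\partial_{x_{ab}}$. Apply $\partial_\rho=\prod_{i\in R^c}\partial_{y_{i\rho(i)}}$ to both sides of $\detab(T(X))=\detab(X)$. On the left, $\detab(T(X))$ is literally $\detab$ of the coordinate matrix $(y_{ij})$, so the extraction principle in the $y$-variables returns the single minor $\detab(T(X)_{l_1 l_2}^{k_1 k_2})$ (or its $\detba$ companion). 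On the right, the chain rule turns $\partial_\rho$ into an order-$(n-2)$ constant-coefficient operator in the $x_{ab}$, which by the extraction principle sends $\detab(X)$ to a linear combination of the minors $\detab(X_{\cdots}^{\cdots})$ and $\detba(X_{\cdots}^{\cdots})$. Equating the two sides is precisely the asserted expression. To obtain \emph{both} the $(\alpha,\beta)$ and the $(\beta,\alpha)$ minor of $T(X)$ at a fixed position $(R,L)$, I would replace $\rho$ by its composition with a transposition of $R^c$, which flips the parity of the completing permutation; this is available exactly when $|R^c|=n-2\ge 2$, which is where the hypothesis $n\ge 4$ is used.

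The routine parts are the parity bookkeeping in the extraction principle and the verification that monomial operators with a repeated row annihilate $\detab$. The one delicate point is making sure the left-hand extraction genuinely returns the intended minor of $T(X)$ rather than some mixture; this rests entirely on $T$ being invertible, so that $\{y_{ij}\}$ is a bona fide coordinate system in which $\detab(T(X))$ equals $\detab$ of the matrix $(y_{ij})$ verbatim, and the clean $y$-variable computation of the second paragraph applies unchanged.
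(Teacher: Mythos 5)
Your proposal is correct and takes essentially the same approach as the paper: both apply the order-$(n-2)$ differential operator $\prod_{i\neq k_1,k_2}\partial/\partial y_{i\rho(i)}$ to the identity $\detab(T(X))=\detab(X)$, reading off the $2\times 2$ generalized minor of $T(X)$ on the left and using the constant-coefficient chain rule to rewrite the right side as a linear combination of generalized $2\times 2$ minors of $X$. Your parity-flip device (composing $\rho$ with a transposition of $R^c$, which needs $n\ge 4$) is exactly the paper's re-choice of an even $\sigma$ with $\sigma(k_1)=l_2$, $\sigma(k_2)=l_1$ for the $\detba$ case.
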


\begin{proof}
	Let $Y:=T(X)$.
	We can write each entry of $X=T^{-1}(Y)$ as $x_{st} = \sum_{p,q=1}^{n} g_{st}^{pq} y_{pq}$.
	We first prove the case $ \detab(T(X)_{l_1 l_2}^{k_1 k_2})$.	
	Since $n \ge 4$, there exists $ \sigma \in A_n$ such that $ \sigma(k_1) = l_1$, $\sigma(k_2) = l_2$.	
	The permutation $ (l_1 ~ l_2)\sigma \in \bA_n $ satisfies $ (l_1 ~ l_2)\sigma(k_1) = l_2 $ and $ (l_1 ~ l_2)\sigma(k_2) = l_1$.
	We compute 
	\begin{eqnarray*}
		\detab(Y_{l_1 l_2}^{k_1 k_2}) &=& \detab\!\begin{pmatrix}
			y_{k_1 l_1} & y_{k_1 l_2} \\ 
			y_{k_2 l_1} & y_{k_2 l_2}
		\end{pmatrix} \\
		&=&\dfrac{\partial^{n-2}}{\partial y_{1 \sigma (1)} \cdots \widehat{\partial y_{k_1 \sigma (k_1)}} \cdots \widehat{\partial y_{k_2 \sigma (k_2)}}\cdots \partial y_{n \sigma (n)} }\detab(Y) \\
		&=&\dfrac{\partial^{n-2}}{\partial y_{1 \sigma (1)} \cdots \widehat{\partial y_{k_1 \sigma (k_1)}} \cdots \widehat{\partial y_{k_2 \sigma (k_2)}}\cdots \partial y_{n \sigma (n)} }\detab(X).
	\end{eqnarray*}

	To compute it, we use
	$$\frac{\partial}{\partial y_{p q}} \detab\!  (X_{l_1 \cdots l_r}^{k_1 \cdots k_r})  =  \sum_{s,t=1}^{n}  \frac{\partial x_{st}}{\partial y_{pq}} \frac{\partial}{\partial x_{st}} \detab\!  (X_{l_1 \cdots l_r}^{k_1 \cdots k_r}) $$
	and
	$$\frac{\partial}{\partial y_{p q}} \detba\!  (X_{l_1 \cdots l_r}^{k_1 \cdots k_r})  =  \sum_{s,t=1}^{n}  \frac{\partial x_{st}}{\partial y_{pq}} \frac{\partial}{\partial x_{st}} \detba\! (X_{l_1 \cdots l_r}^{k_1 \cdots k_r}).$$
	We have $ \frac{\partial x_{st}}{\partial y_{pq}} = g_{st}^{pq}\in K$ and $\frac{\partial}{\partial x_{st}} \detab(X_{l_1 \cdots l_r}^{k_1 \cdots k_r})$ is equal to $\detab(X_{l_1 \cdots \hat{t} \cdots l_r}^{k_1 \cdots \hat{s} \cdots k_r})$, $ \detba(X_{l_1 \cdots \hat{t} \cdots l_r}^{k_1 \cdots \hat{s} \cdots k_r})$ or 0.
	Hence, differentiating $n-2$ times, the lemma follows.
	
	We now turn to the case  $ \detba(T(X)_{l_1 l_2}^{k_1 k_2})$.
	Since $n \ge 4$, there exists $ \sigma \in A_n$ such that $ \sigma(k_1) = l_2, \, \sigma(k_2) = l_1$.
	The permutation $ (l_1 ~ l_2)\sigma \in \bA_n $ satisfies $ (l_1 ~ l_2)\sigma(k_1) = l_1, (l_1 ~ l_2)\sigma(k_2) = l_2$.
	Hence the same proof works for $ \detba(T(X)_{l_1 l_2}^{k_1 k_2})$.
\end{proof}

\begin{Lem}\label{rank1}
	Let $A  \in \Mat_n(K)$.
	If $P_2(A) = \overline{P}_2(A) = 0$, then $A$ is 0, a row matrix or a column matrix.
\end{Lem}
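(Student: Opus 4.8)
The plan is to unwind the two vanishing hypotheses into a condition purely on the entries of $A$, and then to finish with a short combinatorial argument about which entries of $A$ can be nonzero.

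First I would compute the relevant $2 \times 2$ generalized determinants explicitly. For row indices $k_1 < k_2$ and column indices $l_1 < l_2$ the submatrix is $A_{l_1 l_2}^{k_1 k_2} = \left(\begin{smallmatrix} a_{k_1 l_1} & a_{k_1 l_2} \\ a_{k_2 l_1} & a_{k_2 l_2} \end{smallmatrix}\right)$, and since $S_2 = \{e, (1\,2)\}$ with $A_2 = \{e\}$ and $\bar A_2 = \{(1\,2)\}$, we get $\det^{(1,0)}(A_{l_1 l_2}^{k_1 k_2}) = a_{k_1 l_1} a_{k_2 l_2}$ and $\det^{(0,1)}(A_{l_1 l_2}^{k_1 k_2}) = a_{k_1 l_2} a_{k_2 l_1}$. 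Thus the assumption $P_2(A) = \overline{P}_2(A) = 0$ is exactly the statement that for all $k_1 < k_2$ and all $l_1 < l_2$ both $a_{k_1 l_1} a_{k_2 l_2} = 0$ and $a_{k_1 l_2} a_{k_2 l_1} = 0$.

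Second, I would reinterpret this on the support $S := \{(i,j) \mid a_{ij} \neq 0\}$, claiming it is equivalent to the statement that any two elements of $S$ share a row index or a column index. Indeed, suppose $(i,j), (i',j') \in S$ with $i \neq i'$ and $j \neq j'$, and say $i < i'$. If $j < j'$, then $a_{ij}a_{i'j'}$ is the diagonal product $a_{k_1 l_1}a_{k_2 l_2}$ with $(k_1,k_2,l_1,l_2)=(i,i',j,j')$, so the $P_2$ condition forces it to vanish; if instead $j > j'$, then $a_{ij}a_{i'j'}$ is the anti-diagonal product $a_{k_1 l_2}a_{k_2 l_1}$ with $(l_1,l_2)=(j',j)$, so the $\overline{P}_2$ condition forces it to vanish. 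Either way a product of two nonzero entries would be $0$, a contradiction; hence no two nonzero entries sit in distinct rows and distinct columns.

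Finally I would run the combinatorial step. If $S = \emptyset$ then $A = 0$. Otherwise, suppose $S$ is not contained in a single column; then it contains two points in distinct columns, which must share a row $r_0$, giving $(r_0,c_1),(r_0,c_2) \in S$ with $c_1 \neq c_2$. For any $(i,j) \in S$ with $i \neq r_0$, sharing a coordinate with each of $(r_0,c_1)$ and $(r_0,c_2)$ would force $j = c_1$ and $j = c_2$ simultaneously, which is impossible; hence every element of $S$ lies in row $r_0$, i.e.\ $A$ is a row matrix. By the symmetric argument, if $S$ is contained in a single column then $A$ is a column matrix, and these exhaust all cases.

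Each step is elementary, and I do not expect a serious obstacle; the one place to be careful is the case split in the second step, where one must verify that \emph{both} hypotheses are genuinely used: the $P_2$ condition rules out two nonzero entries in the ``increasing'' diagonal position, while the $\overline{P}_2$ condition is precisely what rules out the reflected ``decreasing'' position, so neither condition alone would suffice.
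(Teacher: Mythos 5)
Your proof is correct and takes essentially the same approach as the paper: both arguments reduce the hypotheses $P_2(A)=\overline{P}_2(A)=0$ to the observation that no two nonzero entries of $A$ can occupy distinct rows and distinct columns, and then finish with elementary combinatorics on the support. The only cosmetic difference is that the paper anchors the argument at a normalized nonzero entry $a_{11}$ (chosen without loss of generality), whereas you phrase the same dichotomy invariantly in terms of the support set $S$.
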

\begin{proof}
	Consider $A \ne 0$.
	Without loss of generality we can assume $ a_{11} \neq 0 $.	
	Then $ P_2 (A) = 0$ implies $a_{ij} = 0 \ (2 \le \forall i, \, j \le n)$.	
	If there is $ j \ge 2 $ such that $a_{1j} \neq 0$, then $a_{i1} = 0 \,(2 \le \forall i \le n)$ from $\overline{P}_2(A)=0$.	
	Thus $A$ is a row matrix.
	If there is $ i \ge 2 $ such that $a_{i1} \neq 0$, then $a_{1j} = 0 \,(2 \le \forall j \le n)$ from $\overline{P}_2(A)=0$.
	Thus $A$ is a column matrix.	
\end{proof}

\begin{Lem}\label{h1}
	Define $F_{ij} := T(E_{ij})$.
	If $ \alpha \neq \pm \beta $, then the number of non-zero entries in $F_{ij}$ is one.
\end{Lem}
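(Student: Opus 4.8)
The plan is to show that each matrix unit $E_{ij}$ is carried by $T$ to a scalar multiple of a matrix unit. The strategy is to extract from the stabilizer condition, via Lemma \ref{lin}, a system of quadratic identities on the entries of $F_{ij}$, first controlling each $F_{ij}$ individually and then controlling the images of entire rows and columns of matrix units.

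First I would feed the defining relation $\detab(T(X))=\detab(X)$ through Lemma \ref{lin}. Writing $Y:=T(X)$ and $T(X)=\sum_{i,j}x_{ij}F_{ij}$, so that $y_{pq}=\sum_{i,j}(F_{ij})_{pq}\,x_{ij}$, Lemma \ref{lin} says that, in the variables $x_{ij}$, both $\detab(Y_{l_1l_2}^{k_1k_2})$ and $\detba(Y_{l_1l_2}^{k_1k_2})$ expand as linear combinations of the $2\times2$ generalized determinants of $X$. Since $\alpha\neq\pm\beta$, the forms $\detab(X_{l_1l_2}^{k_1k_2})$ and $\detba(X_{l_1l_2}^{k_1k_2})$ together span exactly the monomials $x_{ab}x_{cd}$ with $a\neq c$ and $b\neq d$; hence every square $x_{ab}^2$, every same-row monomial $x_{ab}x_{ad}$ and every same-column monomial $x_{ab}x_{cb}$ must occur with coefficient $0$ in the expansions of both $\detab(Y_{l_1l_2}^{k_1k_2})$ and $\detba(Y_{l_1l_2}^{k_1k_2})$.

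Then I would read off these vanishing coefficients. The coefficient of $x_{ab}^2$ gives $\alpha(F_{ab})_{k_1l_1}(F_{ab})_{k_2l_2}+\beta(F_{ab})_{k_1l_2}(F_{ab})_{k_2l_1}=0$ and, from the $\detba$ part, the same identity with $\alpha,\beta$ interchanged; as $\alpha^2\neq\beta^2$ these force $P_2(F_{ab})=\overline{P}_2(F_{ab})=0$, so $F_{ab}$ is $0$, a row matrix or a column matrix by Lemma \ref{rank1}, and is nonzero because $T$ is invertible and the $F_{ab}$ form a basis. The coefficient of a same-row monomial $x_{ab}x_{ad}$ likewise forces the two polarized identities $(F_{ab})_{k_1l_1}(F_{ad})_{k_2l_2}+(F_{ad})_{k_1l_1}(F_{ab})_{k_2l_2}=0$ together with its anti-diagonal analogue, and symmetrically a same-column monomial $x_{ab}x_{cb}$ yields the corresponding identities for the pair $F_{ab},F_{cb}$. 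These polarizations are precisely the cross terms that appear when one expands $P_2$ and $\overline{P}_2$ of a linear combination within a fixed domain row (or column): for fixed $i$ and scalars $c_j$, the expansion of $P_2(\sum_j c_j F_{ij})$ has diagonal part $\sum_j c_j^2\,P_2(F_{ij})$ and cross part built from the polarized identities, so it collapses to $0$, and likewise for $\overline{P}_2$. Hence, by Lemma \ref{rank1}, every element of $W_i:=T(\mathrm{span}\{E_{i1},\dots,E_{in}\})$ is $0$, a row matrix or a column matrix; that is, $W_i$ is an $n$-dimensional space of matrices of rank at most $1$. By the classification of linear spaces of rank-$\le1$ matrices, all elements of $W_i$ share a common column line or a common row line; since moreover each element is a row or a column matrix, this common line must be a coordinate axis, so $W_i$ is one full codomain row space or one full codomain column space. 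The symmetric argument shows each $V_j:=T(\mathrm{span}\{E_{1j},\dots,E_{nj}\})$ is a full codomain row or column space.

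Finally I would intersect. Since $E_{ij}$ spans the intersection of domain row $i$ with domain column $j$, we get $F_{ij}\in W_i\cap V_j$. Distinct $W_i$ meet only in $0$, which forces all $W_i$ to be of the same type; say all are distinct codomain row spaces. A codomain row space cannot meet every $V_j$ in a line, so each $V_j$ must then be a codomain column space, and $W_i\cap V_j$ is the line spanned by a single matrix unit; thus $F_{ij}$ is a scalar multiple of a matrix unit and has exactly one nonzero entry. The main obstacle is exactly this upgrade: Lemma \ref{rank1} only yields ``row or column matrix,'' which is far too weak since such a matrix may have many entries, so the crux is to promote the pointwise information to the polarized identities, use them to force the images of whole rows and columns to be rank-$\le1$ subspaces, classify those, and close with the intersection count.
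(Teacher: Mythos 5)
Your proof is correct, and after the shared first step it takes a genuinely different route from the paper. Both arguments begin the same way: extracting from Lemma \ref{lin} (you do it by reading off coefficients of the monomials $x_{ab}^2$, the paper by evaluating at $X=E_{ij}$) that $P_2^{(\alpha,\beta)}(F_{ij})=P_2^{(\beta,\alpha)}(F_{ij})=0$, hence $P_2(F_{ij})=\overline{P}_2(F_{ij})=0$ since $\alpha^2\neq\beta^2$, and then applying Lemma \ref{rank1}. From there the paper argues by contradiction: if some $F_{ij}$ had two nonzero entries, say in row $i'$, then the conditions $P_2(F_{ij}+F_{it})=\overline{P}_2(F_{ij}+F_{it})=0$ (and likewise for $F_{ij}+F_{tj}$) force \emph{every} $F_{it}$ and $F_{tj}$ to be a row matrix in that same row $i'$, so $T$ maps a $(2n-1)$-dimensional space into an $n$-dimensional one, contradicting injectivity. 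You instead upgrade the pairwise identities to full polarization, conclude that each image $W_i=T(\mathrm{span}\{E_{i1},\dots,E_{in}\})$ (and each $V_j$) is an $n$-dimensional space of rank-$\le 1$ matrices, invoke the classification of such spaces (common column line or common row line), show the common line must be a coordinate axis, and finish by an intersection-type argument. Your route buys more: it essentially establishes that $T$ permutes the coordinate row and column spaces, which is a head start on Lemma \ref{CPXQ}; but it leans on an external (if standard and easily proved) classification lemma, and the steps you state tersely --- that the common line is a coordinate axis, and that a row-type $V_j$ cannot meet all the $W_i$ nontrivially --- each need the short verifications you gesture at (e.g.\ summing two basis elements, and counting the $n$ distinct row spaces). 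The paper's argument is shorter and entirely self-contained, using only Lemma \ref{rank1} plus a dimension count; both hinge on the injectivity of $T$ at the final step.
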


\begin{proof}
	By Lemma \ref{lin}, both $\detab({F_{ij}}_{l_1 l_2}^{k_1 k_2})$ and $\detba({F_{ij}}_{l_1 l_2}^{k_1 k_2})$ are linear combination of  $ \detab({E_{ij}}_{l'_1 l'_2}^{k'_1 k'_2})$ and $ \detba({E_{ij}}_{l'_1 l'_2}^{k'_1 k'_2}) \ (1 \le k_1',\, k_2',\, l_1',\, l_2' \le n)$ respecticely.
	We thus get $ P_2^{(\alpha,\beta)}(F_{ij})= \alpha \, P_2(F_{ij}) + \beta \, \overline{P}_2(F_{ij})=0$ and $ P_2^{(\beta,\alpha)}(F_{ij})= \beta \, P_2(F_{ij}) + \alpha \, \overline{P}_2(F_{ij})=0$.
	Since $ \alpha^2 - \beta^2 \neq 0 $, $ P_2(F_{ij}) = \overline{P}_2(F_{ij}) = 0 $.
	Applying Lemma \ref{rank1}, we see that $ F_{ij} $ is a row matrix or a column matrix.
	
	Suppose that the number of non-zero entries in $F_{ij}$ is two or more.
	Let us assume that $F_{ij}$ is a row matrix with non-zero entries in the  $i'$th row.
	Since $F_{ij}+F_{it}=T(E_{ij}+E_{it}),\, F_{ij}+F_{tj}=T(E_{ij}+E_{tj})$, we have $P_2(F_{ij}+F_{it})=\overline{P}_2(F_{ij}+F_{it})=P_2(F_{ij}+F_{tj})=\overline{P}_2(F_{ij}+F_{tj})=0 \,(1 < \forall t \le n)$.
	By Lemma \ref{rank1}, $F_{ij}+F_{it}$ and $F_{ij}+F_{tj}$ are row matrices or column matrices, so that $ F_{it} $ and $ F_{tj} $ are row matrices lying in the $ i' $th row.
	However $\dim \mathrm{span}\{E_{i1},E_{i2},\dots,E_{in},E_{1j},\dots,E_{nj}\} > \dim \mathrm{span}\{E_{i'1},E_{i'2},\dots,E_{i'n}\}$, which contradicts the fact that $T$ is non-singular.
	The same proof works in the case that $F_{ij}$ is a column matrix.
\end{proof}

By Lemma \ref{h1}, we have $T(E_{ij})=c_{ij}E_{i'j'}$.
Since $ T $ is non-singular, $ c_{ij} \ne 0 $ and  $(i,j) \ne (s,t) $ implies $ (i',j') \ne (s',t')$.
Hereafter, we always assume $ \alpha \neq \pm \beta $ and define maps $ \mu,\lambda $ by $ T(E_{ij}) = c_{ij} E_{\mu(i,j)\lambda{(i,j)}} $.

\begin{Lem}\label{CPXQ}
	There exist permutation matrices $ P := (\delta_{i \sigma(j)})_{1 \le i,j \le n}$, $ Q := (\delta_{i \tau(j)})_{1 \le i,j \le n} $ where $\mathrm{sgn}(\sigma)\, \mathrm{sgn}(\tau)=1$, and a matrix $ C := (c_{ij})_{1 \le i,j \le n} \in M_{n,n}(K) $ with $ \forall c_{ij} \neq 0 $ such that $ T(X)=C*PXQ $ or $ T(X) = C*P \tp X Q $ (the operation $ * $ is the Hadamard product).
\end{Lem}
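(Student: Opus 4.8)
The plan is to study the bijection $\Phi(i,j):=(\mu(i,j),\lambda(i,j))$ of the index set $\{1,\dots,n\}^2$ coming from $T(E_{ij})=c_{ij}E_{\mu(i,j)\lambda(i,j)}$, writing $R_i$ and $C_j$ for the $i$th row and $j$th column of this grid. First I would show that $\Phi$ carries every $R_i$ and every $C_j$ onto a full row or a full column; then a partition argument pins down the monomial shape $C*PXQ$ or $C*P\tp X Q$; and finally the full invariance $\detab(T(X))=\detab(X)$ forces the sign condition.

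For the first step I would reuse the mechanism already present in Lemma \ref{h1}. For a fixed row index $i$ and $j\ne k$, the matrix $E_{ij}+E_{ik}$ is a row matrix, so $P_2(E_{ij}+E_{ik})=\overline{P}_2(E_{ij}+E_{ik})=0$; applying Lemma \ref{lin} as a polynomial identity (specialised at this matrix) together with $\alpha^2\ne\beta^2$ gives $P_2(T(E_{ij}+E_{ik}))=\overline{P}_2(T(E_{ij}+E_{ik}))=0$, and Lemma \ref{rank1} then shows $c_{ij}E_{\Phi(i,j)}+c_{ik}E_{\Phi(i,k)}$ is a row or column matrix. Since $c_{ij},c_{ik}\ne 0$, the two grid points $\Phi(i,j),\Phi(i,k)$ lie in a common line. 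Hence $\Phi(i,1),\dots,\Phi(i,n)$ are pairwise collinear, and a short case check (if, relative to a fixed point, one point shared its row while another shared its column, those two would share neither) shows that for $n\ge 3$ all of them lie in one line; being $n$ distinct points they fill a whole row or whole column. The same argument applies to each $C_j$.

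For the monomial shape I would use that the images $\Phi(R_1),\dots,\Phi(R_n)$ are $n$ pairwise disjoint full lines covering the grid: since a row-line and a column-line always meet, they are either all rows or all columns. In the ``all rows'' case $\Phi(R_i)$ is the row $\sigma(i)$ for a permutation $\sigma$, so $\mu(i,j)=\sigma(i)$; then each column image $\{(\sigma(i),\lambda(i,j)):i\}$ has $n$ distinct first coordinates, hence is a full column, forcing $\lambda(i,j)=\tau(j)$ for a permutation $\tau$. Thus $\Phi(i,j)=(\sigma(i),\tau(j))$, which unwinds to $T(X)=C*PXQ$; the ``all columns'' case gives $\Phi(i,j)=(\sigma(j),\tau(i))$ and $T(X)=C*P\tp X Q$.

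It remains to prove $\mathrm{sgn}(\sigma)\,\mathrm{sgn}(\tau)=1$, and this is where I expect the real work and the only genuine use of $\alpha\ne\pm\beta$. In the non-transpose case $T(X)$ is a monomial matrix with $(p,q)$-entry $c_{pq}x_{\sigma^{-1}(p)\tau^{-1}(q)}$, so expanding along $\rho\in S_n$ gives $\prod_p T(X)_{p\rho(p)}=C_\nu\prod_i x_{i\nu(i)}$ with $\nu=\tau^{-1}\rho\sigma$, $C_\nu:=\prod_i c_{i\nu(i)}$, and $\mathrm{sgn}(\nu)=\mathrm{sgn}(\sigma)\,\mathrm{sgn}(\tau)\,\mathrm{sgn}(\rho)$. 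Comparing the coefficients of the distinct monomials $\prod_i x_{i\nu(i)}$ in $\detab(T(X))=\detab(X)$: if $\mathrm{sgn}(\sigma)\,\mathrm{sgn}(\tau)=1$ this only normalises $C_\nu\equiv 1$, but if $\mathrm{sgn}(\sigma)\,\mathrm{sgn}(\tau)=-1$ it forces $C_\nu=\alpha/\beta$ for even $\nu$ and $C_\nu=\beta/\alpha$ for odd $\nu$. Writing $r=\alpha/\beta$, the identity $C_{(kl)}C_{(mp)}=C_e\,C_{(kl)(mp)}$ (valid for any $C$ since the supports are disjoint, and requiring four distinct indices, i.e. $n\ge 4$) yields $r^{-2}=r^{2}$, so $r^4=1$; the two $3$-cycle relations $C_e=C_{(klm)}=C_{(kml)}$ combined with the transposition relations $c_{kk}c_{ll}=r^2 c_{kl}c_{lk}$ yield $r^6=1$; hence $r^2=1$, i.e. $\alpha=\pm\beta$, contradicting the hypothesis. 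Therefore $\mathrm{sgn}(\sigma)\,\mathrm{sgn}(\tau)=1$, and the transpose case is identical once one notes $\detab(\tp X)=\detab(X)$. The hardest part is this final coefficient comparison together with extracting $r^2=1$ from the product relations.
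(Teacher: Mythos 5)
Your proof is correct, and although it runs on the same engine as the paper's --- rank-one sums of the form $E_{ij}+E_{ik}$, Lemma~\ref{lin} specialised at a matrix, and Lemma~\ref{rank1}, exactly as in Lemma~\ref{h1} --- it is organised genuinely differently. The paper proceeds by successive normalisation: parity-matched row/column swaps force $\mu(i,i)=i$ and $\lambda(i,i)=i$, the residual case where the last two diagonal images are swapped is dismissed by a coefficient remark, and the off-diagonal positions are then pinned down through the case split (I)/(I\hspace{-.1em}I). You instead prove one global structural fact (each grid line maps onto a full line, via pairwise collinearity of the $n$ distinct image points), after which a disjoint-covering argument immediately yields $\Phi(i,j)=(\sigma(i),\tau(j))$ or $(\sigma(j),\tau(i))$; this avoids the repeated re-normalisation and is arguably cleaner. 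The most substantial divergence is the sign condition: the paper builds $\mathrm{sgn}(\sigma)\,\mathrm{sgn}(\tau)=1$ into its normalisation and rules out the bad case by asserting that the coefficients of $x_{11}\cdots x_{n-1,n-1}x_{nn}$ in $\detab(T(X))$ and $\detab(X)$ ``are different'' --- a terse claim, since that coefficient involves the still-unknown entries of $C$ and a single monomial cannot by itself give a contradiction. Your argument supplies precisely this missing content: assuming $\mathrm{sgn}(\sigma)\,\mathrm{sgn}(\tau)=-1$, comparing all monomial coefficients forces $C_\nu=\alpha/\beta$ for even $\nu$ and $C_\nu=\beta/\alpha$ for odd $\nu$, and the multiplicative relations coming from disjoint transpositions ($r^{4}=1$, needing $n\ge 4$) and from $3$-cycles combined with transpositions ($r^{6}=1$) give $r^{2}=1$, i.e.\ $\alpha=\pm\beta$, a contradiction. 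Two small patches are needed: before writing $r=\alpha/\beta$ you must dispose of the degenerate cases $\alpha=0$ or $\beta=0$, where the comparison reads $0=\alpha$ or $\beta\,C_\nu=0$ and the contradiction is immediate; and you should state explicitly that $C_\nu=\prod_i c_{i\nu(i)}\neq 0$ because every $c_{ij}\neq 0$, which is what legitimises dividing in the relations $r^{-2}=r^{2}$ and $c_{kk}c_{ll}=r^{2}c_{kl}c_{lk}$.
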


\begin{proof}
	We may assume that $\mu(1,1)=1$ and $\lambda(1,1)=1 $ by swapping rows or columns even number of times, that is, the number of row and column transpositions are both even or both odd.
	Since $ \rank(E_{11}+E_{22}) = 2 $, we have $ P_2(F_{11}+F_{22}) \neq 0$ or $ \overline{P}_2(F_{11}+F_{22}) \neq 0$. It follows that $ \mu(2,2) \ge 2 $ and $ \lambda(2,2) \ge 2$.
	Therefore, swapping rows or columns even number of times properly, we may assume that $\mu(2,2)=2$ and $\lambda(2,2)=2$.
	By continuing the same argument, we can assume $\mu(i,i)=i \, (1 \le \forall i \le n)$ and $\lambda(i,i)=i \, (1 \le \forall i \le n-2)$.
	There are two possibilities: (i) $\lambda(n-1,n-1)=n-1$ and $\lambda(n,n)=n$, (ii) $\lambda(n-1,n-1)=n$ and $\lambda(n,n)=n-1$.
	However, the case (ii) never happens because the coefficients of $ x_{11} \dots x_{n-1 n-1} x_{n n}$ in $ \detab (T(X))$ and in $ \detab (X)$ are different.
	Therefore, we conclude that
	$$ T(X) = C * P\begin{pmatrix}
	x_{11} &  &  & \mbox{\lower1.1ex\llap{\smash{\huge$*$}}} \\ 
	& \ddots &  &  \\ 
	&  & x_{n-1,n-1} &  \\ 
	\mbox{\strut\rlap{\smash{\huge$*$}}\quad} &  &  & x_{nn}
	\end{pmatrix} Q$$ where $\mathrm{sgn}(\sigma)\,\mathrm{sgn}(\tau)=1$.
	To continue the argument, we may assume $ P=Q=I_n $ that is $\mu(i,i)=i$ and $\lambda(i,i)=i \, (1 \le \forall i \le n)$ without loss of generality.
	
	By $P_2(E_{11}+E_{12})=\overline{P}_2(E_{11}+E_{12})=0$, we get $ \mu(1,2)=1$ or $ \lambda(1,2)=1$.
	By $P_2(E_{22}+E_{12})=\overline{P}_2(E_{22}+E_{12})=0$, we also get $ \mu(1,2)=2$ or $ \lambda(1,2)=2$.
	Combining these, we obtain two possibilities: (I) $\mu(1,2)=1 $ and $ \lambda(1,2)=2$, (I\hspace{-.1em}I) $\mu(1,2)=2 $ and $\lambda(1,2)=1$.
	
	Suppose first that (I) holds.
	Let $ 3 \le \gamma \le n$.
	By $P_2(E_{11}+E_{1\gamma})=\overline{P}_2(E_{11}+E_{1\gamma})=0$, we get $\mu(1,\gamma)=1$  or $\lambda(1,\gamma)=1$.
	By $P_2(E_{12}+E_{1\gamma})=\overline{P}_2(E_{12}+E_{1\gamma})=0$, we also get $\mu(1,\gamma)=1$ or $\lambda(1,\gamma)=2$.
	Combining these gives $\mu(1,\gamma)=1$.
	By $P_2(E_{\gamma \gamma}+E_{1\gamma})=\overline{P}_2(E_{\gamma \gamma}+E_{1\gamma})=0$, we obtain $\lambda(1,\gamma)=\gamma$.
	
	Let $ \delta \neq 1$.
	By $P_2(E_{11}+E_{\delta 1})=\overline{P}_2(E_{11}+E_{\delta 1})=0$, we have $\mu(\delta,1)=1$ or $ \lambda(\delta , 1)=1 $.　However $ \mu(1,\gamma) = 1 \, (1 \le \gamma \le n)$ gives $\mu(\delta,1) \neq 1$ as  $ T $ is non-singular.
	Hence $ \lambda(\delta , 1)=1 $.
	By $P_2(E_{\delta \delta}+E_{\delta 1})=\overline{P}_2(E_{\delta \delta}+E_{\delta 1})=0$, we obtain $ \mu(\delta,1)= \delta$.
	
	Let $ 1 < \gamma \neq \delta \le n$.
	By $P_2(E_{\delta 1}+E_{\delta \gamma})=\overline{P}_2(E_{\delta 1}+E_{\delta \gamma})=0$, we get $ \mu(\delta ,\gamma)= \delta$ or $ \lambda(\delta, \gamma) = 1 $ but the latter is impossible.
	By $P_2(E_{1 \gamma}+E_{\delta \gamma})=\overline{P}_2(E_{1 \gamma}+E_{\delta \gamma})=0$, we also get $ \lambda(\delta ,\gamma)= \gamma$.
	
	By the above argument, we obtain $ \mu(i,j) = i $ and $ \lambda(i,j) = j \, (1 \le \forall i ,\, j \le n)$ that is
	$$T(X)=C*PXQ \ (\mathrm{sgn}(\sigma) \, \mathrm{sgn}(\tau)=1).$$
	The same proof works for the case (I\hspace{-.1em}I) and we also obtain
	$$T(X)=C*P \tp XQ \ (\mathrm{sgn}(\sigma)\, \mathrm{sgn}(\tau)=1),$$
	if (I\hspace{-.1em}I) holds.
\end{proof}

\begin{Lem}\label{rackC}
	If $ n \ge 5 $, then $ \rank(C) = 1 $.
\end{Lem}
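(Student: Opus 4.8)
The plan is to exploit the rigid form $T(X)=C*PXQ$ (or its transpose variant) supplied by Lemma~\ref{CPXQ} and to read off multiplicative constraints on the entries $c_{ij}$ from the invariance $\detab(T(X))=\detab(X)$. First I would discard the permutations: composing $T$ with $X\mapsto P^{-1}XQ^{-1}$ (and, in the transpose alternative, first with $X\mapsto\tp X$) produces another element of $\stab(\detab_n)$, since these maps preserve $\detab_n$ precisely because $\mathrm{sgn}(\sigma)\,\mathrm{sgn}(\tau)=1$ and transposition fixes each of $\det_{A_n}$, $\det_{\bA_n}$, and this composition leaves $C$ unchanged. Hence I may assume $T(X)=C*X$, i.e.\ $T(X)_{ij}=c_{ij}x_{ij}$. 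Expanding $\detab(C*X)$ and comparing the coefficients of the pairwise distinct squarefree monomials $\prod_i x_{i\rho(i)}$ with those of $\detab(X)$ gives $\alpha\prod_i c_{i\rho(i)}=\alpha$ for $\rho\in A_n$ and $\beta\prod_i c_{i\rho(i)}=\beta$ for $\rho\in\bA_n$. Since $\alpha\neq\pm\beta$, at least one coefficient is nonzero; fixing a class $\Sigma\in\{A_n,\bA_n\}$ with nonzero coefficient yields the key relation $\prod_{i=1}^n c_{i\rho(i)}=1$ for every $\rho\in\Sigma$.

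Next I would convert this global condition into local three-term relations. A $3$-cycle is even, so for $\rho\in\Sigma$ and any $3$-cycle $(k\ l\ m)$ the permutation $\rho\cdot(k\ l\ m)$ again lies in $\Sigma$; dividing the two instances of the key relation and cancelling the common factors leaves
\[
c_{kp}\,c_{lq}\,c_{mr}=c_{kq}\,c_{lr}\,c_{mp},\qquad p=\rho(k),\ q=\rho(l),\ r=\rho(m).
\]
The crucial point, and the reason for the hypothesis $n\ge 5$, is that I need this for \emph{arbitrary} distinct rows $k,l,m$ and \emph{arbitrary} distinct columns $p,q,r$, which requires some $\rho\in\Sigma$ realizing the three prescribed values. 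Prescribing $\rho$ on $k,l,m$ leaves $n-3$ indices free, and only when $n-3\ge 2$ can I choose the extension so as to adjust its parity and land in the prescribed class $\Sigma$. For $n=4$ the extension is forced, the parity cannot be corrected, and the relation may be unavailable, which is exactly where the argument (and the theorem) must break down.

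Finally, from these relations I would extract rank one. Fixing two rows $k,l$ and setting $\phi(p):=c_{kp}/c_{lp}$ (legitimate, as all $c_{ij}\neq 0$), comparison of the relation for the row-triple $(k,l,m)$ with the one for $(l,k,m)$ gives $\phi(p)\phi(r)=\phi(q)^2$ for all distinct $p,q,r$; since $n\ge 5$ there are enough columns for this to force $\phi$ to be constant. Thus $c_{kp}/c_{lp}$ is independent of $p$, and taking $l=1$ yields $c_{ij}=u_i v_j$ with $u_i:=c_{i1}/c_{11}$ and $v_j:=c_{1j}$, so that $C=u\,\tp v$ and $\rank(C)=1$. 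The only genuinely delicate step is the parity-constrained realization of the prescribed three-cycle data in the second paragraph, which is precisely the source of the $n\ge 5$ requirement; the first and third paragraphs are formal once that realization is in hand.
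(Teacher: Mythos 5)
Your proof is correct, but it takes a genuinely different route from the paper's. Both arguments begin the same way: after reducing to $T(X)=C*X$ (the paper does this implicitly, you do it explicitly by composing with the stabilizer elements $X\mapsto P^{-1}XQ^{-1}$ and transposition, which is legitimate since $\mathrm{sgn}(\sigma)\,\mathrm{sgn}(\tau)=1$), coefficient comparison yields $\prod_i c_{i\rho(i)}=1$ for every $\rho$ in at least one parity class $\Sigma$. From there the paper solves these multiplicative equations head-on by induction on $n$: an explicit computation for $n=4$ produces a family of solutions carrying signs $\ep_u,\ep_v\in\{\pm1\}$, the case $n=5$ is then used to rule out the nontrivial signs, and an inductive step propagates the rank-one form $c_{ij}=a_{i1}a_{1j}/a_{11}$ to all $n\ge 5$. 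You instead localize the global product condition via $3$-cycles: since $\rho$ and $\rho\cdot(k\ l\ m)$ lie in the same class, you get $c_{kp}c_{lq}c_{mr}=c_{kq}c_{lr}c_{mp}$ for arbitrary distinct rows and columns, where your extension-with-prescribed-parity argument (needing $n-3\ge 2$) is exactly where $n\ge 5$ enters; then the quotient $\phi(p)=c_{kp}/c_{lp}$ satisfies $\phi(p)\phi(r)=\phi(q)^2$ for all distinct $p,q,r$, which forces $\phi$ constant (for distinct $r,s$ pick $p,q$ outside $\{r,s\}$ and cancel $\phi(p)$ against $\phi(q)^2$ — this already works for $n\ge 4$ columns), hence $c_{ij}=u_iv_j$ and $\rank(C)=1$. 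Your route is shorter, avoids induction and the heavy $n=4$ bookkeeping, and makes the role of the hypothesis $n\ge 5$ conceptually transparent; what the paper's computation buys in exchange is the explicit list of exceptional $n=4$ solutions (the Hadamard-type sign patterns $\ep_u,\ep_v$), which shows that the lemma, and hence the theorem, genuinely fails at $n=4$ rather than merely eluding the method.
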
	

\begin{proof}
	Comparing the coefficients of  $ \detab(X) $ and $ \detab(C * PXQ) $, we obtain
	\begin{numcases}
	{\hspace{3.0ex}}
	\label{se1}\hspace{-0.8ex} c_{1 w(1)}\dots c_{n w(n)}=1 \ (\forall w \in A_n) & $ \hspace{-1.2ex} (\beta = 0) $ \\
	\label{se2} \hspace{-0.8ex} c_{1 w(1)}\dots c_{n w(n)}=1 \ (\forall w \in \bA_n) & $ \hspace{-1.2ex} (\alpha = 0) $ \\
	\label{se3}\hspace{-0.8ex} c_{1 w(1)}\dots c_{n w(n)}=1 \ (\forall w \in S_n) & $ \hspace{-1.2ex} (\alpha \neq 0 , \beta \neq 0)  $.
	\end{numcases}
	Let us solve these simultaneous equations.
	
	For preperation, we first consider the case $ n=4 $.
	Let $ \beta = 0 $.
	By $ c_{11} c_{22} (c_{33} c_{44}) = 1 ,\, c_{12} c_{23} (c_{31} c_{44}) = 1 $ and $ c_{13} c_{21} (c_{32} c_{44}) = 1$, we can write\[ \begin{pmatrix}
	c_{11}       & c_{12}       & c_{13}       \\
	c_{21}       & c_{22}       & c_{23}       \\
	c_{31}c_{44} & c_{32}c_{44} & c_{33}c_{44}
	\end{pmatrix} = \begin{pmatrix}
	a_{11} & a_{12} & a_{13} \\ 
	a_{21} & u & v \\ 
	\frac{1}{a_{12} v}& \frac{1}{a_{13}a_{21}} & \frac{1}{a_{11}u}
	\end{pmatrix}  .\]
	Set $ a_{31} := c_{31} $.
	Then we have\[ c_{44} = \frac{1}{a_{12} a_{31} v},\, c_{32} = \frac{a_{12} a_{31} v}{a_{13}a_{21}},\, c_{33} = \frac{a_{12} a_{31} v}{a_{11} u}.\]
	Set $ a_{14} := c_{14} $.
	By $ c_{14}c_{23}c_{32}c_{41}=1 $, we have\[ a_{14} \cdot v \cdot \frac{a_{12} a_{31} v}{a_{13}a_{21}} \cdot c_{41}=1 \ \ \therefore c_{41} = \frac{a_{13} a_{21}}{a_{12}a_{14}a_{31}v^2}.\]
	By $ c_{12}c_{24}c_{33}c_{41}=1 $ and $ c_{13}c_{22}c_{34}c_{41}=1 $, we have\[ a_{12} \cdot c_{24} \cdot \frac{a_{12} a_{31} v}{a_{11}u} \cdot \frac{a_{13} a_{21}}{a_{12}a_{14}a_{31}v^2} = 1 \ \ \therefore c_{24} = \frac{a_{11} a_{14} u v}{a_{12}a_{13}a_{21}}\]
	\[ 
	a_{13} \cdot u \cdot c_{34} \cdot \frac{a_{13} a_{21}}{a_{12}a_{14}a_{31}v^2} = 1 \ \ \therefore c_{34} = \frac{a_{12} a_{14} a_{31} v^2}{a_{13}^2 a_{21} u}.\]
	By $ c_{13}c_{24}c_{31}c_{42} = 1 $ and $ c_{14}c_{21}c_{33}c_{42} = 1 $, we have\[ a_{13} \cdot \frac{a_{11} a_{14} u v}{a_{12}a_{13}a_{21}} \cdot a_{31} \cdot c_{42} = 1 \ \ \therefore c_{42} = \frac{a_{12} a_{21}}{a_{11} a_{14} a_{31} u v} \]
	\[ a_{14} \cdot a_{21} \cdot \frac{a_{12} a_{31} v}{a_{11} u} \cdot c_{42} = 1 \ \ \therefore c_{42} = \frac{a_{11} u}{a_{12} a_{14} a_{21} a_{31} v} .\]
	Combining these yields $$\displaystyle u= \pm \frac{a_{12} a_{21}}{a_{11}} .$$
	\noindent By $ c_{11}c_{24}c_{32}c_{43}=1 $ and $ c_{14}c_{22}c_{31}c_{43}=1 $, we have\[ a_{11} \cdot \frac{a_{11} a_{14} u v}{a_{12}a_{13}a_{21}} \cdot \frac{a_{12} a_{31} v}{a_{13}a_{21}} \cdot c_{43} =1 \therefore  c_{43} = \frac{a_{13}^2 a_{21}^2 }{a_{11}^2 a_{14}a_{31}u v^2}\]
	\[ 
	a_{14} \cdot u \cdot a_{31} \cdot c_{43} = 1 \ \ \therefore c_{43} = \frac{1}{a_{14} a_{31} u} .\]
	Combining these yields $$ v= \pm \frac{a_{13} a_{21}}{a_{11}} .$$
	Now, set $ a_{41} := c_{41} = \frac{a_{11}^2}{a_{12}a_{13}a_{14}a_{21}a_{31}}.$
	Summarizing the above, we obtain\begin{eqnarray} \label{C1}C= \begin{pmatrix}
	1 & 1 & 1 & 1 \\ 
	1 & \ep_u & \ep_v & \ep_u \ep_v \\ 
	1 & \ep_v & \ep_u \ep_v & \ep_u \\ 
	1 & \ep_u \ep_v & \ep_u & \ep_v
	\end{pmatrix} * \left( \frac{a_{i1}a_{1j}}{a_{11}} \right)_{1 \le i,j \le 4}  \end{eqnarray}
	where $ \ep_u,\ep_v \in \{+1,-1\} $ and $ a_{11}\cdots a_{14}a_{11}\cdots a_{41}=a_{11}^4 $.
	Conversely, the matrix C is a solution of the simultaneous equation \eqref{se1}.
	
	Let $ \alpha = 0 $.
	Interchanging the 3rd and the 4th row of \eqref{C1}, we obtain \begin{eqnarray} \label{C2}C= \begin{pmatrix}
	1 & 1 & 1 & 1 \\ 
	1 & \ep_u & \ep_v & \ep_u \ep_v \\ 
	1 & \ep_u \ep_v & \ep_u & \ep_v \\
	1 & \ep_v & \ep_u \ep_v & \ep_u 
	\end{pmatrix} * \left( \frac{a_{i1}a_{1j}}{a_{11}} \right)_{1 \le i,j \le 4}  \end{eqnarray}
	where $ \ep_u,\ep_v \in \{+1,-1\} $ and $ a_{11} \cdots a_{14}a_{11} \cdots a_{41}=a_{11}^4 $ as a solution of the simultaneous equation \eqref{se2}.
	
	Let $ \alpha \neq 0 $ and $ \beta \neq 0 $.
	Combining \eqref{C1} and \eqref{C2}, we obtain \begin{eqnarray} \label{C3}C= \left( \frac{a_{i1}a_{1j}}{a_{11}} \right)_{1 \le i,j \le 4}  \end{eqnarray}
	where $ a_{11}\cdots a_{14}a_{11}\cdots a_{41}=a_{11}^4 $ as a solution of the simultaneous equation \eqref{se3}.
	
	Let us consider the simultaneous equations for $ n \ge 5 $.
	We prove that the solution of the each simultaneous equations \eqref{se1},\eqref{se2},\eqref{se3} is expressible as
	\begin{eqnarray} \label{C}C= \left( \frac{a_{i1}a_{1j}}{a_{11}} \right)_{1 \le i,j \le n}  \end{eqnarray}	 where $ a_{11}\cdots a_{1n}a_{11}\cdots a_{n1}=a_{11}^n $ respectively by induction of the matrix size $ n $.
	
	In the case $ n = 5$ and $ \beta = 0$, using the fact that any solution of \eqref{se1} may be written as  \eqref{C1}, we can write
	\begin{eqnarray*}\begin{pmatrix}
			c_{11} & c_{12} & c_{13} & c_{14} \\ 
			c_{21} & c_{22} & c_{23} & c_{24} \\ 
			c_{31} & c_{32} & c_{33} & c_{34} \\ 
			c_{41}c_{55} & c_{42}c_{55} & c_{43}c_{55} & c_{44}c_{55}
		\end{pmatrix} =
		\renewcommand{\arraystretch}{2} \begin{pmatrix}
			\hspace{1ex} a_{11} \hspace{1ex} & a_{12}                             & a_{13}                                  & a_{14}                                  \\
			a_{21}                           & \ep_u\dfrac{a_{21}a_{12}}{a_{11}}  & \ep_v\dfrac{a_{21}a_{13}}{a_{11}}       & \ep_u \ep_v\dfrac{a_{21}a_{14}}{a_{11}} \\
			a_{31}                           & \ep_v\dfrac{a_{31}a_{12}}{a_{11}}  & \ep_u \ep_v\dfrac{a_{31}a_{13}}{a_{11}} & \ep_u\dfrac{a_{31}a_{14}}{a_{11}}       \\
			z                                & \ep_u \ep_v\dfrac{za_{12}}{a_{11}} & \ep_u\dfrac{za_{13}}{a_{11}}            & \ep_v\dfrac{za_{14}}{a_{11}}
		\end{pmatrix}
	\end{eqnarray*}
	where $ \ep_u,\ep_v \in \{+1,-1\} $ and $ a_{11} \cdots a_{14}a_{11} \cdots a_{31}z=a_{11}^4 $.
	Set $ a_{41} := c_{41} $.
	Then $ c_{55} = \frac{z}{a_{41}} ,\, c_{42}=\ep_u \ep_v \frac{a_{41}a_{12}}{a_{11}},\, c_{43}=\ep_u \frac{a_{41}a_{13}}{a_{11}},\, c_{44}=\ep_v \frac{a_{41}a_{14}}{a_{11}} $.
	Set $ a_{15} := c_{15}$.
	If $ \ep_u = -1$ or $\ep_v = -1$, there exist $ w \in A_n $ such that $ w(1) = 5 ,\, w(5)=1 ,\,  c_{i w(i)} = \frac{a_{i1}a_{1w(i)}}{a_{11}} \, (2 \le i \le 4) $ and $ w' \in A_n $ such that $ w'(1) = 5 ,\, w'(5)=1 ,\,  c_{i w'(i)} = -\frac{a_{i1}a_{1w'(i)}}{a_{11}} \, (2 \le i \le 4) $.
	Then  $ c_{15}c_{2w(2)}c_{3w(3)}c_{4w(4)}c_{51} \neq c_{15}c_{2w'(2)}c_{3w'(3)}c_{4w'(4)}c_{51} $ and one of the two cannot be equal to 1, a contradiction.
	Thus $ \ep_u = \ep_v = +1 $.
	The similar consideration applies to the case $ n=5 $ and $ \alpha=0 $.	
	
	Therefore, assuming \eqref{C} to hold for $ n-1 $, we can write 
	\begin{eqnarray*} \begin{pmatrix}
			c_{11} & \cdots & c_{1,n-1}  \\[0.5em]
			\vdots & \ddots & \vdots  \\[0.5em] 
			c_{n-2,1} & \cdots & c_{n-2,n-1}  \\
			c_{n-1,1}c_{nn} & \cdots & c_{n-1,n-1}c_{nn} 
		\end{pmatrix} = \renewcommand{\arraystretch}{2}\begin{pmatrix}
			a_{11} & \cdots & \cdots & a_{1,n-1} \\ 
			\vdots & \dfrac{a_{21}a_{12}}{a_{11}} & \dots & \dfrac{a_{21}a_{1,n-1}}{a_{11}} \\ 
			\vdots & \vdots & \ddots & \vdots \\[-0.4em]
			a_{n-2,1} & \dfrac{a_{n-2,1}a_{12}}{a_{11}} & \dots & \dfrac{a_{n-2,1}a_{1,n-1}}{a_{11}} \\ 
			z & \dfrac{za_{12}}{a_{11}} & \dots & \dfrac{za_{1,n-1}}{a_{11}}
		\end{pmatrix}
	\end{eqnarray*}
	where $ a_{11}\cdots a_{1,n-1}a_{11}\cdots a_{n-2,1}z=a_{11}^{n-1} $.
	Set $ a_{n-1,1} := c_{n-1,1} $.
	Then we have \[ c_{nn} = \dfrac{z}{a_{n-1,1}}, ~ c_{n-1,j}=\dfrac{a_{n-1,1}a_{1j}}{a_{11}} \, (1 \le j \le n-1) .\]
	Set $ a_{1n} := c_{1n} $.
	By $ c_{1n}(c_{2 w (2)} \cdots c_{n-1 w(n-1)})c_{n1} = 1$ for $ w \in A_n $ such that $ w (1) = n , \,w (n) = 1 $,
	\[ a_{1n} \cdot \dfrac{a_{12}\cdots a_{1,n-1}a_{21}\cdots a_{n-1,1}}{a_{11}^{n-2}} \cdot c_{n1} = 1 \]
	\[ \therefore c_{n1} = \dfrac{a_{11}^{n-2}}{a_{12}\cdots a_{1n}a_{21}\cdots a_{n-1,1}} = \dfrac{a_{11}z}{a_{1n}a_{n-1,1}}.
	\]
	Set $ a_{n1} := c_{n1} $.
	Then
	\[ z = \dfrac{a_{1n} a_{n-1,1}a_{n1}}{a_{11}}.
	\]
	It follows that
	$ c_{ij} = \frac{a_{i1}a_{1j}}{a_{11}} $ for $1 \le i ,\, j \le n-1 $ and $(i, j) = (1, n)$, $(n, 1)$, $(n, n)$ where $ a_{11} \cdots a_{1n} a_{11} \cdots a_{n1} = a_{11}^n$.
	By $ c_{in} (c_{1w(1)} \cdots \widehat{c_{in}} \cdots c_{n-1w(n-1)} )c_{n1} = 1 $, we have
	\[ c_{in} \cdot \dfrac{a_{12}\cdots a_{1,n-1}a_{21}\cdots a_{n-1,1}}{a_{11}^{n-3} a_{i1}} \cdot a_{n1} = 1 \]
	\[ \therefore c_{in} = \dfrac{a_{11}^{n-3} a_{i1}}{a_{12}\cdots a_{1,n-1}a_{21}\cdots a_{n1}} = \dfrac{a_{i1}a_{1n}}{a_{11}} .\]
	By $ (c_{1w(1)} \cdots c_{n-1w(n-1)}) c_{nj}= 1 $, we have
	\[ \dfrac{a_{12} \cdots a_{1n} a_{11} \cdots a_{n-1,1}}{a_{11}^{n-1} a_{1j} } \cdot c_{nj} = 1 \]
	\[ \therefore c_{nj} = \dfrac{a_{11}^{n-1} a_{1j} }{a_{12} \cdots a_{1n} a_{11} \cdots a_{n-1,1}} = \dfrac{a_{n1} a_{1j}}{a_{11}}.\]
	From the above, we obtain \eqref{C}.
	
	Each of $ 2 \times 2 $ minor determinants of C is 0, so that $ \rank(C) = 1 $ follows.	
\end{proof}

\begin{proof}[Proof of Theorem \ref{main}]
	Let $ T^{-1} \in \stab (\detab_n) $ and $ \alpha \neq \pm \beta $.
	By Lemma \ref{h1} and \ref{CPXQ}, We can write $ T(X)=C*PXQ $ or $ T(X) = C*P \tp X Q $ where $ \text{sgn}(\sigma)\, \text{sgn}(\tau) = 1 $.
	By Lemma \ref{rackC}, we can write $ c_{ij} = l_ir_j \,(1 \le i,\,j \le n)$ where $ l_1 \cdots l_n r_1 \cdots r_n =1$.
	Set $ L := \text{diag}(l_1,\dots, l_n) $ and $ R := \text{diag}(r_1,\dots, r_n) $.
	Then we obtain  $ T(X)=LPXQR $ or $ T(X) = LP \tp X QR $ where $ \text{sgn}(\sigma)\,\text{sgn}(\tau) = 1 $, which proves the theorem.	
\end{proof}


\bibliographystyle{amsalpha}

\end{document}